\definecolor{darkcellcolour}{rgb}{0.75,0.75,0.75}
\newtheorem{Theorem}{Theorem}
\newtheorem{Corollary}[Theorem]{Corollary}
\newtheorem{Proposition}[Theorem]{Proposition}
\newtheorem{Lemma}[Theorem]{Lemma}
\theoremstyle{definition}
\newtheorem{Example}[Theorem]{Example}
\theoremstyle{remark}
\DeclareMathOperator{\Gr}{Gr} 
\DeclareMathOperator{\GL}{GL}
\newcommand{\abs}[1]{\left|#1\right|}
\DeclareMathOperator{\rank}{rk}
\newcommand{\st}{s.\,t.\ } 
\newcommand{\ie}{\textit{i.\,e.\ }} 
\newcommand{\eg}{\textit{e.\,g.\ }} 
\newcommand{\N}{\mathbb{N}}
\newcommand{\Z}{\mathbb{Z}}
\newcommand{\R}{\mathbb{R}}
\newcommand{\CC}{\mathbb{C}}
\newcommand{\K}{\mathbb{K}}
\newcommand{\Acal}{\mathcal{A}}
\newcommand{\Ccal}{\mathcal{C}}
\newcommand{\Tcal}{\mathcal{T}}
\newcommand{\Rcal}{\mathcal{R}}
\newcommand{\Gcal}{\mathcal G}
\author{Matthias Lenz} 
\address{Universit\'e de Fribourg, D\'epartement de Math\'ematiques, 1700 Fribourg, 
Switzerland}
\email{maths@matthiaslenz.eu}
\thanks{%
The author
was supported by a
fellowship within the postdoc programme of the German Academic Exchange Service (DAAD)%
}
\title[Representations of weakly multiplicative arithmetic matroids] 
{%
 Representations of weakly multiplicative arithmetic matroids are unique
}
\date{\today}
\address{%
}
\keywords{Arithmetic matroid, representation, toric arrangement, combinatorial topology}
\subjclass[2010]{Primary: 
05B35, 
52C35. 
Secondary:
14M15, 
14N20, 
57N65.
}
\begin{document}

\begin{abstract}
   An arithmetic matroid is weakly multiplicative if the multiplicity of at least one of its bases is equal to the product of the multiplicities of  its elements. 
  We show that if such an  arithmetic matroid can be represented by an integer matrix, then this matrix is uniquely determined.
  This implies that the integer cohomology ring of a centred toric arrangement whose arithmetic   matroid is weakly multiplicative
  is determined by its poset of layers. 
  This partially answers a question asked by Callegaro--Delucchi.
\end{abstract}

\maketitle

\section{Introduction}

 An arithmetic matroid $\Acal$ is a triple $(E, \rank, m)$, where $(E,\rank)$ is a matroid on the ground set $E$ with rank function $\rank$ 
 and $m: 2^E\to \Z_{\ge 1}$ is the so-called multiplicity function
 \cite{branden-moci-2014,moci-adderio-2013}.
 In the representable case, \ie when the 
 arithmetic matroid is determined by a list of integer vectors,
 this multiplicity function records data such as the 
 absolute value of the determinant of a basis.

 Arithmetic matroids were recently introduced by D'Adderio and Moci 
 \cite{moci-adderio-2013}.
 They capture many combinatorial and topological properties of toric arrangements \cite{callegaro-delucchi-2017,lawrence-2011,moci-tutte-2012}
 in a similar way as matroids carry information about the corresponding hyperplane arrangement \cite{orlik-terao-1992,stanley-2007}.
 The study of arithmetic matroids can be seen as a step towards 
 the development of combinatorial frameworks to study the 
 topology of  very broad classes of spaces that are   
 complements
 of normal crossing divisors in smooth projective varieties.
 See the introduction of \cite{callegaro-delucchi-2017} for more details on this line of research.
 Toric arrangements and arithmetic matroids play an important role in the theory of vector
 partition functions, which describe the number of integer points in polytopes
 \cite{concini-procesi-book,lenz-arithmetic-2016}.
 They also appear naturally in the study    
 of cell complexes and Ehrhart theory of zonotopes 
 \cite{bajo-burdick-chmutov-2014,moci-tutte-2012,stanley-1991}.
 \smallskip

 Let $X\in \Z^{d\times N}$ be a matrix. The arithmetic matroid represented by $X$ is invariant under
 a left action of $\GL(d,\Z)$ on $X$ and under multiplication of some of the columns by $-1$. Therefore, when we are saying that a representation is 
 unique, we mean that any two distinct representations are equal up to these two types of transformations.
 
  An arithmetic matroid is \emph{torsion-free} if $m(\emptyset)=1$. 
 Let $\Acal=(E,\rank, m)$ be a torsion-free arithmetic matroid.
 Let $B\subseteq E$ be a basis. We say that $B$  is \emph{multiplicative} 
 if it  satisfies $m(B) =  \prod_{x\in B} m(\{ x \})$.
 This condition is always satisfied if $m(B)=1$.
     We call a torsion-free arithmetic matroid \emph{weakly multiplicative}
     if it has at least one multiplicative basis. This notion was introduced in \cite{lenz-ppcram-2017}.

 \begin{Theorem}%
    \label{Theorem:RepresentationUniqueness}
    Let $\Acal=(E,\rank,m)$ be an arithmetic matroid of rank $d$ that is weakly multiplicative, torsion-free, and representable.
    Then $\Acal$ has a unique representation, \ie
    if $X \in \Z^{d\times N}$ and $X' \in\Z^{d\times N}$ both represent $\Acal$, then there is a matrix
    $T\in \GL(d,\Z)$  and a  diagonal matrix   $D\in \Z^{N\times N}$ with diagonal entries
    in $\{1,-1\}$ \st
    $X' = T X  D$.
 \end{Theorem}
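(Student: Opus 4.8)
The plan is to exploit the $\GL(d,\Z)$-freedom to bring a multiplicative basis into diagonal form, and then to recover the remaining columns, first their absolute values and afterwards their signs. First I would discard the loops of the underlying matroid: in every integer representation a loop is represented by the zero vector and its position is determined by $\Acal$, so loops only contribute entries $1$ to $D$ and I may assume $\Acal$ is loopless. Next, fix a multiplicative basis $B=\{b_1,\dots,b_d\}$ and set $a_i:=m(\{b_i\})\ge 1$. The key point --- and the only place weak multiplicativity enters --- is that for a representation $X$ with $B$-columns $v_1,\dots,v_d$, each $v_i=a_iw_i$ for a unique primitive $w_i\in\Z^d$ (since $b_i$ is not a loop), while $\abs{\det(v_1,\dots,v_d)}=m(B)=\prod_ia_i$, whence $\abs{\det(w_1,\dots,w_d)}=1$ and $(w_1,\dots,w_d)$ is a $\Z$-basis of $\Z^d$. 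Applying the unique $T\in\GL(d,\Z)$ with $Tw_i=e_i$ turns the $B$-columns of $X$ into $a_1e_1,\dots,a_de_d$; doing this for both $X$ and $X'$ and using that $a_i$ is an invariant of $\Acal$, I reduce to the case where the $B$-columns of $X$ and of $X'$ both equal $\diag(a_1,\dots,a_d)$. A short computation then shows that the transformations $X\mapsto TXD$ preserving this shape are exactly $T=\diag(\epsilon_1,\dots,\epsilon_d)$ with $\epsilon_k\in\{1,-1\}$ and $D$ a $\{1,-1\}$-diagonal matrix with $D_{b_k,b_k}=\epsilon_k$; so it remains to produce signs $\epsilon_k$ (one per row) and $\eta_j$ (one per $j\in E\setminus B$) with $X'_{k,j}=\epsilon_k\eta_jX_{k,j}$ for all such $j,k$.

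The absolute values are then immediate from Cramer's rule: $B\setminus\{b_k\}\cup\{j\}$ is independent precisely when $X_{k,j}\ne0$, and in that case $m\bigl(B\setminus\{b_k\}\cup\{j\}\bigr)=\abs{X_{k,j}}\prod_{i\ne k}a_i$; since independence and multiplicity are invariants of $\Acal$, this gives $\abs{X_{k,j}}=\abs{X'_{k,j}}$ for all $k,j$ and $X_{k,j}=0\iff X'_{k,j}=0$. Writing $\sigma_{k,j}:=X'_{k,j}/X_{k,j}\in\{1,-1\}$ on the support, I now have to solve $\epsilon_k\eta_j=\sigma_{k,j}$ on the bipartite support graph $G$ with parts $\{1,\dots,d\}$ and $E\setminus B$. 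Such a system is solvable if and only if every cycle of $G$ is $\sigma$-balanced (the product of the $\sigma$'s over its edges equals $1$), and since the chordless cycles span the cycle space over $\F_2$ and balance is multiplicative under symmetric differences, it suffices to handle one chordless cycle $C:k_1-j_1-k_2-\cdots-k_m-j_m-k_1$.

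For such a $C$ I would consider $S:=(B\setminus\{b_{k_1},\dots,b_{k_m}\})\cup\{j_1,\dots,j_m\}$, a $d$-element subset of $E$. After reordering rows and columns, the $S$-submatrix of $X$ is block lower triangular with diagonal blocks $\diag(a_k:k\notin\{k_1,\dots,k_m\})$ and $M:=(X_{k_i,j_l})_{i,l}$, so $\abs{\det M}=m(S)/\prod_{k\notin\{k_1,\dots,k_m\}}a_k$ when $S$ is a basis and $\det M=0$ otherwise; either way $\abs{\det M}$ is determined by $\Acal$. Because $C$ is chordless, $M_{i,l}=0$ unless $l\equiv i$ or $l\equiv i-1\pmod m$, so only the identity permutation and a single $m$-cycle contribute to $\det M$, giving $\det M=P+(-1)^{m-1}Q$ with $P:=\prod_iX_{k_i,j_i}\ne0$ and $Q:=\prod_iX_{k_{i+1},j_i}\ne0$ (indices mod $m$). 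Squaring then yields $PQ=(-1)^{m-1}\bigl(\abs{\det M}^2-\abs{P}^2-\abs{Q}^2\bigr)/2$, an integer determined by $\Acal$; hence $PQ=P'Q'$, and the product of the $\sigma$'s over the edges of $C$ equals $(P'/P)(Q'/Q)=P'Q'/(PQ)=1$. So every chordless cycle is balanced, the sign system is solvable, and composing the resulting row and column sign changes with the earlier $\GL(d,\Z)$ normalizations yields $X'=TXD$.

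The hard part will be exactly this passage from absolute values to signs. Every individual multiplicity, and indeed any invariant built from a subconfiguration that involves only one element of $E\setminus B$, is insensitive to coordinatewise sign changes, so sign information can only come from genuinely two-dimensional data: the $2\times2$ and larger minors recorded by the multiplicities of the "multi-swap" sets $S$. Turning this into something usable forces the combinatorial reformulation as balance of cycles in $G$, and the reason for restricting to chordless cycles is that then $M$ has only two permutation terms, so that $PQ$ --- hence the required sign product --- can be read off from the known absolute values together with $m(S)$.
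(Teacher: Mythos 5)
Your proof is correct, and while it shares its computational heart with the paper's argument (both are adaptations of Brylawski--Lucas), the organization is genuinely different. The paper first normalizes signs along a \emph{coordinatizing path} (a spanning forest of the bipartite support graph $\Gcal_A$), and then proves uniqueness of the remaining entries by an iterative construction $P\subseteq P_2^*\subseteq P_3^*\subseteq\cdots$, determining one entry at a time via circuits of size $2t$; this forces a two-case analysis at each step, the second case being a $\theta$-graph (``short-circuit'') argument when the circuit's submatrix has a chord. You instead reformulate the whole sign problem as solvability of $\epsilon_k\eta_j=\sigma_{k,j}$ on the support graph, reduce to balance of cycles, and observe that chordless cycles generate the cycle space over $\F_2$ while the balance function is a homomorphism on that space --- which eliminates the chorded case entirely and removes the need for the path-normalization step (your Lemma~\ref{Lemma:CoordinatizingPathUniqueness} analogue is absorbed into the standard solvability criterion for sign systems). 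The shared core is the observation that for a chordless cycle the associated minor has exactly two permutation terms, $\det M=P+(-1)^{m-1}Q$, with $\abs{\det M}$, $\abs{P}$, $\abs{Q}$ all determined by $\Acal$; your extraction of $PQ$ from $\abs{\det M}^2-P^2-Q^2$ and the resulting identity $P'Q'/(PQ)=1$ is a cleaner and more symmetric way to conclude than the paper's ``$a_1=-a_2$ is impossible'' computation. You also re-derive the diagonalization of a multiplicative basis from primitivity and $\abs{\det}=m(B)=\prod a_i$ rather than citing Hermite normal form plus Lemma~\ref{Lemma:DiagonalMatrixMultiplicativeBasis}, which makes the write-up self-contained. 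What your route buys is brevity and the avoidance of the iterative bookkeeping; what the paper's route buys is the explicit enumeration of all representations in $B$-basic form (Proposition~\ref{Proposition:NoOfRepresentations}), which falls out of the coordinatizing-path normalization but is not immediate from your balance argument.
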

 Callegaro and Delucchi have recently put forward an incorrect proof\footnote{%
 In the proof in \cite{callegaro-delucchi-2017}, the argumentation in case b) 
 is flawed.
 For example, the proof    fails for the matrix
 $X = \left(\begin{array}{rrrrrr}
   1 & 0 & 0 & 1 & 0 & 1 \\
   0 & 1 & 0 & 1 & 1 & 0 \\
   0 & 0 & 1 & 0 & 1 &-1 \\
  \end{array}\right)$.
  In the inductive step, it is claimed that the bottom right entry of $X$ can be made positive, while all other signs are preserved (case b).
  This is false. }
 of this theorem in the special case where one basis has multiplicity $1$ 
 \cite[Theorem~7.2.1]{callegaro-delucchi-2017}.
 \smallskip

 Let $X\in \Z^{d\times N}$. Each column of $X$ defines a character $\chi : (\CC^*)^d \to \CC^*$ of the complex torus $ (\CC^*)^d$. 
 The set of kernels of these characters is called the centred toric arrangement defined by $X$.
  Callegaro and Delucchi asked whether the isomorphism type of 
  the integer cohomology ring of the complement of a complexified toric arrangement is determined combinatorially, \ie by the poset of layers of the toric arrangement
 \cite{callegaro-delucchi-2017}.
 Since the poset of layers encodes the arithmetic matroid \cite[Lemma~5.4]{moci-tutte-2012}, Theorem~\ref{Theorem:RepresentationUniqueness} implies an affirmative answer in the  
 special case of centred toric arrangements whose arithmetic matroid is weakly multiplicative.

  The condition that the arithmetic matroid of the arrangement is weakly multiplicative can also be explained geometrically. 
  Let $\Tcal_X = \{ \chi_1^{-1}(1), \ldots, \chi_N^{-1}(1)  \}$  be a centred toric arrangement, where each $\chi_i$ denotes a character.
  We assume that $\Tcal_X$ is essential, \ie  $ \bigcap_{i=1}^N \chi_i^{-1}(1) $ is $0$-dimensional.
  Then the arithmetic matroid corresponding to $\Tcal_X$ is weakly multiplicative if and only if the following condition is satisfied:
  there is a set $I\subseteq [N]$ of cardinality $d$ \st 
  $\bigcap_{i \in I} \chi_i^{-1}(1)$ is $0$-dimensional (\ie $I$ is a basis of the corresponding matroid) and 
  the number of connected components of the intersection $\bigcap_{i\in I}  \chi_i^{-1}(1)$ is equal to the product of the numbers of connected components
  of the   $\chi_i^{-1}(1)$ for $i\in I$.

  \begin{Corollary}
    Let $\Tcal_X$ be a centred toric arrangement   in $(\CC^*)^d$ whose corresponding arithmetic matroid is weakly multiplicative. 
    Then the integer cohomology ring of  $\Tcal_X$ is determined by its poset of layers.
  \end{Corollary}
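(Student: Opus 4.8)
The plan is to deduce the Corollary formally from Theorem~\ref{Theorem:RepresentationUniqueness}, using in addition two facts from the literature: that the poset of layers of a centred toric arrangement recovers its arithmetic matroid, and that the integer cohomology ring of the complement admits a presentation depending only on the representing matrix.

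First I would record the relevant invariance. Let $\Tcal_X$ be the centred toric arrangement defined by $X\in\Z^{d\times N}$ and write $\Acal(X)=(E,\rank,m)$ for its arithmetic matroid; as above we assume $\Tcal_X$ is essential, so that $\Acal(X)$ has rank $d$. Replacing a column of $X$ by its negative replaces a character $\chi_i$ by $\chi_i^{-1}$ and hence leaves the kernel $\chi_i^{-1}(1)$ unchanged; left multiplication by $T\in\GL(d,\Z)$ composes every character with the automorphism of $(\CC^*)^d$ induced by $T$; and permuting the columns merely reindexes the characters. Each of these operations carries $\Tcal_X$ isomorphically onto the arrangement of the transformed matrix, and so induces an isomorphism of posets of layers as well as a ring isomorphism between the integer cohomology rings of the two complements. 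I would make the last point explicit by invoking the presentation of the integer cohomology ring of the complement of a centred toric arrangement in terms of its representing matrix, due to Callegaro and Delucchi \cite{callegaro-delucchi-2017}, and observing that it is manifestly unchanged, up to ring isomorphism, under $X\mapsto TXD$ with $T\in\GL(d,\Z)$ and $D$ a diagonal matrix with entries $\pm1$, and under column permutations. In short, the isomorphism type of the integer cohomology ring of the complement of $\Tcal_X$ is a function of the equivalence class of $X$ under these transformations.

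To finish, I would first note that Theorem~\ref{Theorem:RepresentationUniqueness} applies to $\Acal(X)$: it is representable by construction; torsion-free, because $m(\emptyset)$ is the number of connected components of the ambient torus $(\CC^*)^d$, namely $1$; of rank $d$, since $\Tcal_X$ is essential; and weakly multiplicative by hypothesis. Now let $\Tcal_X$ and $\Tcal_{X'}$ be essential centred toric arrangements with isomorphic posets of layers, the common arithmetic matroid being weakly multiplicative. By \cite[Lemma~5.4]{moci-tutte-2012} the poset of layers determines the arithmetic matroid, so there is an isomorphism $\Acal(X)\cong\Acal(X')$; reindexing the columns of $X'$ along the underlying bijection of ground sets---which does not change $\Tcal_{X'}$---we may assume that $X$ and $X'$ represent one and the same arithmetic matroid. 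Theorem~\ref{Theorem:RepresentationUniqueness} then gives $X'=TXD$ with $T\in\GL(d,\Z)$ and $D$ diagonal with entries $\pm1$, and by the first step the integer cohomology rings of the complements of $\Tcal_X$ and $\Tcal_{X'}$ are isomorphic. The only step that requires genuine care is the invariance claim of the second paragraph---checking that the Callegaro--Delucchi presentation is preserved by $\GL(d,\Z)$-changes of coordinates and by sign changes of the columns; everything else is a purely formal consequence of Theorem~\ref{Theorem:RepresentationUniqueness} and \cite[Lemma~5.4]{moci-tutte-2012}.
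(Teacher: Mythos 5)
Your argument is correct and is essentially the paper's own (very terse) justification: the poset of layers determines the arithmetic matroid by \cite[Lemma~5.4]{moci-tutte-2012}, Theorem~\ref{Theorem:RepresentationUniqueness} then pins down the representing matrix up to $X\mapsto TXD$ (and column permutation), and such transformations manifestly preserve the arrangement and hence the integer cohomology ring of its complement. You merely spell out the invariance check and the verification of the theorem's hypotheses (essentiality, torsion-freeness, weak multiplicativity), which the paper leaves implicit.
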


  This result is a step towards 
  a better understanding of one of the main problems in arrangement theory:
  to what extent 
  is the topology of the complement of the arrangement
  determined by the combinatorial data?

 \smallskip
 The following example shows that the condition in Theorem~\ref{Theorem:RepresentationUniqueness} that the arithmetic matroid is weakly multiplicative is necessary.
\begin{Example}
 For $a,b\in \Z$, we define the matrix
 \begin{equation}
 X_{a,b} :=
  \begin{pmatrix}
      1 & a \\
      0 & b 
  \end{pmatrix}.
 \end{equation}
 Let $b\ge 2$.
 Then for any $a\in [b-1]$ that is relatively prime to $b$, the matrix
 $X_{a,b}$ is in Hermite normal form and it represents an arithmetic matroid $\Acal_b$ that is independent of $a$.
$\Acal_b$ is the arithmetic matroid with underlying uniform matroid $U_{2,2}$, whose multiplicity function is equal to $b$ on the  whole ground set 
 and $1$ otherwise.
 \end{Example}

\section{Background}
  
  \subsection{Notation}
  We will  use capital letters to denote matrices and the corresponding small letters to denote their entries.
  For $N\in \N$, we will write $[N]$ to denote the set $\{1,\ldots,N\}$.
  Usually, $N$ will denote the cardinality of a set and $d$ the dimension of the ambient space. We will always assume $d\le N$.

  \subsection{Arithmetic matroids}
  We assume that the reader is familiar with the basic notions of matroid theory \cite{MatroidTheory-Oxley}.
  An arithmetic matroid is a triple $(E, \rank, m)$, where
 $(E, \rank)$ is a matroid and $m : 2^E \to \Z_{\ge 1}$ denotes the
 \emph{multiplicity function}, that satisfies certain axioms.
   Since we are only discussing representable arithmetic matroids in this note, we do not 
  list the axioms for the multiplicity function of an arithmetic matroid here.
  They  can be found in \cite{branden-moci-2014}.

  A \emph{representable arithmetic matroid} is an arithmetic matroid that can be represented by
 a finite list of elements of a finitely generated abelian group $ G \cong \Z^d \oplus \Z_{q_1} \oplus \ldots \Z_{q_n}$.
 Representable and torsion-free arithmetic matroids can be represented by a finite list of elements of a lattice $G \cong \Z^d$.
 We will only consider this type of arithmetic matroid.
 We will assume that the ground set is always $E = \{e_1,\ldots, e_N\}$.
 Then a list $X$ of $N$ vectors in $\Z^d$ can be identified with the matrix $X\in \Z^{d\times N}$ whose columns are the entries of the list.
 
 A list of vectors $X = (x_e)_{e\in E} \subseteq\Z^d$ represents a vectorial matroid $(E,\rank)$ in the usual way.
 The multiplicity function $m$ defined by $X$ 
 is defined as $m(S) := \abs{ (\left\langle  S\right\rangle_\R \cap \Z^d) / \left\langle S\right\rangle}$ for $S\subseteq E$.
 Here, $\langle S \rangle\subseteq \Z^d$ denotes the subgroup generated  by $\{x_e: e\in S\}$ and $\langle S \rangle_\R\subseteq \R^d$
 denotes the subspace spanned by the same set.
 We will write  $\Acal(X)$  to denote the arithmetic matroid   that is represented by $X$.
 
 Let $X\in \Z^{d\times N}$ and let $B\in \Z^{d\times d}$ be a submatrix of full rank. Slightly abusing notation, we will also write
 $B$ to denote the corresponding basis of the underlying arithmetic  matroid.
 It is well-known (\eg it is a special case of \cite[Theorem~2.2]{stanley-1991})
 that
 \begin{equation}
 \label{eq:BasisMultiplicityDeterminant}
    m(B) = \abs{ \det(B) }. 
 \end{equation}

    It follows  from the definition that for $X\in \Z^{d\times N}$, $T\in \GL(d,\Z)$, and $D\in \Z^{N\times N}$ a diagonal matrix
    whose diagonal entries are contained in $\{1,-1\}$, the matrices $X$ and $T \cdot X \cdot D$ represent the same arithmetic matroid.
    In other words, applying a unimodular transformation from the left and multiplying some columns by $-1$ 
    does not change the arithmetic matroid that is represented
    by a matrix.

\subsection{Hermite normal form}
\label{Subsection:HNF}
 We say that matrix $X\in \Z^{d\times N}$ of full rank $d\le N$ is in \emph{Hermite normal form} 
 if for all $i \in [d]$, $0 \le x_{ij} < x_{jj}$ for $i<j$ and $x_{ij}=0$ for $i>j$, \ie
 the first $d$ columns of $X$ form an upper triangular matrix and the diagonal elements are strictly
 bigger than the other elements in the same column.
 It is not completely trivial, but well-known,  that any matrix $X \in \Z^{d\times N}$ of full rank $d$ can be brought into Hermite normal form
 by multiplying it from the left with a unimodular matrix $T\in \GL(d, \Z)$  if the first $d$ columns form a basis
 (\cite[Theorem~4.1 and Corollary~4.3b]{schrijver-TheoryLinearIntegerProgramming}).
 Since such a multiplication does not change the arithmetic matroid represented by the matrix, 
 we will be able to assume that a representation $X$ of a torsion-free arithmetic matroid $\Acal$ is in Hermite normal form. 

 We recall the following simple lemma:
\begin{Lemma}[\cite{lenz-ppcram-2017}]
   \label{Lemma:DiagonalMatrixMultiplicativeBasis}
   Let $X \subseteq \Z^d$ be a list of vectors and let $B$ be a multiplicative basis for the arithmetic matroid  $\Acal(X) = (E, \rank, m)$. 
   Let $X'$ denote the Hermite normal form of $X$ with respect to $B$.
   Then 
   the columns of $X'$ that correspond to $B$ form a diagonal matrix.
 \end{Lemma}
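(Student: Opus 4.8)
The plan is to play the two available expressions for the multiplicity $m(B)$ against each other: on the one hand $m(B)=\abs{\det B}$ by \eqref{eq:BasisMultiplicityDeterminant}, on the other hand $m(B)=\prod_{x\in B}m(\{x\})$ because $B$ is multiplicative. Written in terms of the Hermite normal form, the first product becomes a product of the diagonal entries of the submatrix indexed by $B$, while the second becomes a product of divisors of those same diagonal entries; comparing the two will force each of these divisibilities to be an equality, and that is precisely what kills the off-diagonal entries.

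Concretely, after reordering I may assume $B$ consists of the first $d$ columns, so the submatrix $U=(u_{ij})_{i,j\in[d]}$ of $X'$ indexed by $B$ is upper triangular with positive diagonal and $0\le u_{ij}<u_{jj}$ whenever $i<j$. Since $X'$ is obtained from $X$ by left multiplication with a unimodular matrix it represents the same arithmetic matroid, so \eqref{eq:BasisMultiplicityDeterminant} gives $m(B)=\abs{\det U}=\prod_{j=1}^{d}u_{jj}$. Next I would compute $m(\{b_j\})$ from the $j$-th column $c_j$ of $U$, whose entries are $u_{1j},\dots,u_{jj}$ (and $0$ below): the line $\R c_j$ meets $\Z^{d}$ in the lattice $\Z(c_j/g_j)$ generated by the primitive multiple of $c_j$, where $g_j:=\gcd(u_{1j},\dots,u_{jj})$, so $m(\{b_j\})=[\,\langle c_j\rangle_{\R}\cap\Z^{d}\ :\ \langle c_j\rangle\,]=g_j$. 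In particular $m(\{b_j\})=g_j$ divides $u_{jj}$, hence $0<m(\{b_j\})\le u_{jj}$.

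It remains to combine these. Multiplicativity of $B$ gives $\prod_{j=1}^{d}g_j=\prod_{j=1}^{d}m(\{b_j\})=m(B)=\prod_{j=1}^{d}u_{jj}$; since each factor on the left is a positive integer that is at most the corresponding factor on the right and the two products coincide, we must have $g_j=u_{jj}$ for every $j$, \ie $\gcd(u_{1j},\dots,u_{jj})=u_{jj}$. Thus $u_{jj}\mid u_{ij}$ for all $i<j$, which together with the normal-form inequality $0\le u_{ij}<u_{jj}$ forces $u_{ij}=0$. Hence $U$ is diagonal, which is the claim. The only step needing a little care is the identification of $m(\{b_j\})$ with $\gcd(u_{1j},\dots,u_{jj})$ and the resulting divisibility $m(\{b_j\})\mid u_{jj}$; granted that, the conclusion is just the pigeonhole remark that equal products of positive integers with term-by-term domination are term-by-term equal, followed by the defining inequality of the Hermite normal form.
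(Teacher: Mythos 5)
Your argument is correct: the identification $m(\{b_j\})=\gcd(u_{1j},\dots,u_{jj})$ is valid, the divisibility $g_j\mid u_{jj}$ together with the term-by-term comparison of the two products forces $g_j=u_{jj}$, and the Hermite normal form inequality $0\le u_{ij}<u_{jj}$ then kills the off-diagonal entries. The paper states this lemma without proof (it is imported from \cite{lenz-ppcram-2017}), and your argument is essentially the standard one given there, so nothing further is needed.
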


  \subsection{Toric arrangements}
  
  Let $T_\CC:=(\CC^*)^d$ be the \emph{complex} or \emph{algebraic torus} and let 
  $T_\R:= (S^1)^d$ be the \emph{real torus}.
  As usual, $S^1 := \{ z \in \CC : \abs{z} = 1 \}$. 
 Each $ \lambda=(\lambda_1,\ldots, \lambda_d)\in \Z^d$ determines a character of the torus, \ie a map
 $\chi_\lambda: T_\CC \to \CC^*$ (or $T_\R \to S^1$ in the real case) via
 $ \chi_\lambda( \phi_1, \ldots, \phi_d) := \phi_1^{\lambda_1}   \cdots   \phi_d^{\lambda_d} $.
 A \emph{complex toric arrangement} in $T_\CC$ is a finite set 
 $\Tcal=\{ T_1,\ldots, T_N \}$ with $T_i := \chi_i^{-1}(a_i)$, where $\chi_i$ is a character and $a_i\in \CC^*$ for all $i \in [N]$.
 A \emph{real toric arrangement} is defined similarly: in this case the $\chi_i$ are real characters and $a_i\in S^1$.
 A complex toric arrangement is called \emph{complexified} if all $a_i$ are contained in $S^1$.
 A toric arrangement is called \emph{centred} if $ a_i = 1 $ holds for all $i \in [N]$.
 The set of characters defining a toric arrangement in the $d$-dimensional torus 
 can be identified with a list of vectors in $\Z^d$. 
 The arithmetic matroid represented by this list of vectors is the arithmetic matroid corresponding to the toric arrangement.
A \emph{layer} of a toric arrangement $\Tcal$ is a connected component of a non-empty intersection  of elements of $\Tcal$.
 We obtain a poset structure on the set of layers of $\Tcal$ by ordering them by
 reverse inclusion, \ie $L \le L'$ if  $L' \subseteq L$.

\section{Proof}
 We will prove Theorem~\ref{Theorem:RepresentationUniqueness} 
 by carefully adapting and extending some methods that were developed by Brylawski and Lucas 
 in an article on uniquely representable matroids. 
 They showed that a
 representation $X$ of a matroid over some field $\K$ is unique (up to certain natural transformations)
 if the entries of $X$ are all contained in $\{0,1,-1\}$ \cite[Theorem~3.5]{brylawski-lucas-1976}. 

 Let $d\le N$ be two integers.
 Let $\Acal=(E,\rank,m)$ be an arithmetic matroid that is represented by a matrix
 $X\in \Z^{d\times N}$. Without loss of generality,  $E=[N]$.
 Let $B\subseteq E$ be a basis.
 We say that $X$ is in \emph{$B$-basic form} if there is a diagonal matrix 
 $B\in \Z^{d\times d}$ of full rank with non-negative entries and $A\in \Z^{ d \times (N-d) }$ \st
 $X = ( B \,|\, A )$.  Slightly abusing notation, we denote both the basis of $\Acal$ and the corresponding submatrix  by $B$.
 By definition, as a basis of $\Acal$, $B=[d]$.
 We will index the columns of $A$ by $d+1,\ldots, N$. 
 If $\Acal$ is a weakly multiplicative  arithmetic matroid that is represented by a matrix $X$,
 then we may assume 
 by  Lemma~\ref{Lemma:DiagonalMatrixMultiplicativeBasis}
 that $X$ is in $B$-basic form.
 
 Let $C$ denote the matrix that is obtained from $A$ by setting all   non-zero entries to $1$. This is called the \emph{$B$-fundamental circuit incidence matrix}.
 This name is justified as follows: if we label the rows of $C$ by $e_1,\ldots, e_d$ and the columns by $e_{d+1},\ldots, e_{N}$, an entry $c_{ij}$ of
 $C$ is equal to $1$
 if and only if $e_i$ is contained in the unique circuit contained in $B\cup \{e_j\}$, the so-called \emph{fundamental circuit} of $B$ and $e_j$.
 
 The matrix $C$ can also be seen as the adjacency matrix of a bipartite graph $\Gcal_A$ with vertex set $\{r_1,\ldots, r_d\} \cup \{ c_{d+1},\ldots, c_{N}\}$,
 where $r_i$ corresponds to the $i$th row and $c_j$ corresponds to the $j$th column.
 It will be important that one can identify an edge $\{r_i, c_j\}$ of $\Gcal_A$ with a non-zero entry $a_{ij}$ of $A$.
 A spanning forest in this graph will be called a \emph{coordinatizing path}.
 Let us fix a forest $F$ in $\Gcal_A$. Let $c$ be an edge that is not contained in $F$.  
 The fundamental circuit of $F$ and $c$ is called 
  a \emph{coordinatizing circuit} for $c$.
  Note that the graph $\Gcal_A$ has $N$ vertices.
  Let $\kappa(A)$ denote its number of connected components.
  It is easy to see that
  every coordinatizing path has cardinality $ N - \kappa(A) $.

 \newcommand{\hc}[1]{\colorbox{darkcellcolour}{#1}} %
 \newcommand{\nc}[1]{\colorbox{white}{#1}} %
\begin{Example}
\label{Example:Constructions}
 Note that the matrix $X\in \Z^{3\times 7}$ is in $B$-basic form.
 \begin{equation}
  X = 
   \begin{blockarray}{rrrrrrr}
   e_1  & e_2 & e_3 & e_4 & e_5 & e_6 & e_7  \\
  \begin{block}{(rrrrrrr)}
    1  &  0  & 0   & -4  &  0  &  3 &  0 \, \\
    0  &  2  & 0   &  1  &  2  &  0 & -2 \, \\
    0  &  0  & 3   &  0  &  1  & -1 & -1 \, \\
  \end{block}
  \end{blockarray}
\qquad
 C = 
\begin{blockarray}{rrrrr} %
 & e_4 & e_5 & e_6 & e_7 \\
 \begin{block}{r(rrrr)}
 e_1 &  \hc 1  &  \nc  0  & \nc  1  & \nc  0  \\
 e_2 &  \hc 1  &  \hc  1  & \nc  0  & \nc  1 \\ 
 e_3 &  \nc 0  &  \hc  1  & \hc  1  & \hc  1 \\ 
\end{block}
\end{blockarray}\,,
 \end{equation}
 $C$ is the adjacency matrix of the graph $\Gcal_A$ in Figure~\ref{Figure:BipartiteGraph}.
 The entries of $C$ that are highlighted define a coordinatizing path  which corresponds to the spanning forest $F$ of $\Gcal_A$.
 There are only two edges in the graph $\Gcal_A$ that are not contained in the spanning forest:
 $a_{16}$ and $a_{27}$.
 They define the coordinatizing circuits
 $\{a_{25},a_{35}, a_{27}, a_{37}\}$ and
 $\{a_{14}, a_{24}, a_{25}, a_{35}, a_{36}, a_{16}\}$.
 To simplify notation, we have described the edges of $\Gcal_A$ by the corresponding entries of $A$.

 Using the method described in the proof of Lemma~\ref{Lemma:CoordinatizingPathUniqueness}, we can obtain 
 a matrix $X'$ from $X$ where all the elements of the coordinatizing path are positive.
 We first pick a vertex in $\Gcal_A$ that has degree $1$ in the spanning tree, which we remove from the graph. Then we iterate this process until we obtain a graph
 that has no edges.
 This leads to the following sequence of vertices: 
 $r_1, c_4, r_2, c_5, c_6, c_7$. 
 We obtain the matrix $X'$
 by multiplying by $-1$ (in that order) column $7$, column $6$, and row $1$. 
 In three cases, the entry was already positive so no rescaling was necessary. 
 \begin{equation}
   X' = 
   \begin{blockarray}{rrrrrrr}
   e_1  & e_2 & e_3 & e_4 & e_5 & e_6 & e_7  \\
  \begin{block}{(rrrrrrr)}
    1  &  0  & 0   & 4  &  0  &  3 &  0 \, \\
    0  &  2  & 0   & 1  &  2  &  0 & 2 \, \\
    0  &  0  & 3   &  0  &  1  & 1 & 1 \, \\
  \end{block}
  \end{blockarray}
 \end{equation}

\end{Example}

 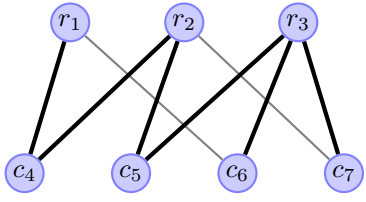
\begin{figure}[t]
 \begin{center}
  \begin{tikzpicture}[line join=round, scale=1,
 gitterpunkt/.style={color=black},
 fe/.style={color=black, ultra thick}, 
 ne/.style={color=gray, thick},  
 vertex/.style={circle,draw=blue!50,fill=blue!20,thick,
 inner sep=0pt,minimum size=5mm}
 ]

  \node [vertex] (r1) at (-1.5, 0) {$r_1$};
  \node [vertex] (r2) at (0, 0) {$r_2$};
  \node [vertex] (r3) at (1.5, 0) {$r_3$};

  \node [vertex] (c4) at (-2.1, -2) {$c_4$};
  \node [vertex] (c5) at (-0.7, -2) {$c_5$};
  \node [vertex] (c6) at ( 0.7, -2) {$c_6$};
  \node [vertex] (c7) at ( 2.1, -2) {$c_7$};

  \draw[fe]  (r1) -- (c4);
  \draw[ne]  (r1) -- (c6);
  \draw[fe]  (r2) -- (c4);
  \draw[fe]  (r2) -- (c5);
  \draw[ne]  (r2) -- (c7);
  \draw[fe]  (r3) -- (c5);
  \draw[fe]  (r3) -- (c6);
  \draw[fe]  (r3) -- (c7);

 \end{tikzpicture}
 \end{center}
 \caption{The bipartite graph corresponding to the matrix $C$ in Example~\ref{Example:Constructions}. The six edges contained in the  spanning forest $F$ are highlighted. }
 \label{Figure:BipartiteGraph}
\end{figure}

\begin{Lemma}%
 \label{Lemma:CoordinatizingPathUniqueness}
  Let $ X \in \Z^{d\times N} $ be a matrix of full rank $d$.
  Suppose that $X$ is in $B$-basic form, \ie there is a diagonal matrix $B\in \Z^{d\times d}$ of full rank with non-negative entries and $A\in\Z^{d\times (N-d)}$
  \st $X=(B\,|\,A)$.
  Let $P = \{ p_1,\ldots, p_{N-\kappa(A)}\}$ be a coordinatizing path and
  let $ \sigma\in   \{ -1, 1\}^{ N - \kappa(A) }$. 
  Then there is a   matrix $ X' = ( B\,|\,A' )$  that represents the same arithmetic matroid $\Acal(X)$ and 
  the entry $p_j$ of $A'$ is equal to $\sigma_j$ times the corresponding entry of $A$.
  The matrix $X'$ can be obtained from $X$ by a sequence of multiplications of rows and columns by $-1$.
\end{Lemma}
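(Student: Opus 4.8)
The plan is to translate the permitted sign changes on $X$ into a combinatorial statement about the bipartite graph $\Gcal_A$ and then to exploit that a forest contains no cycle. First I would pin down exactly which sign changes preserve the $B$-basic form. For $i\in[d]$, multiplying row $i$ of $X$ by $-1$ and then column $i$ of $X$ by $-1$ leaves the diagonal block $B$ unchanged (the entry $b_{ii}$ gets negated twice, the off-diagonal entries stay $0$) and negates every entry of the $i$-th row of $A$; since each of the two steps is of the form $X\mapsto TX$ with $T\in\GL(d,\Z)$ or $X\mapsto XD$ with $D$ a diagonal $\{1,-1\}$-matrix, this operation does not change $\Acal(X)$. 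For $j\in\{d+1,\ldots,N\}$, multiplying column $j$ of $X$ by $-1$ likewise fixes $\Acal(X)$, fixes $B$, and negates the $(j-d)$-th column of $A$. Consequently, for any choice of signs $\tau_v\in\{1,-1\}$ indexed by the vertices $v$ of $\Gcal_A$, there is a finite sequence of multiplications of rows and columns of $X$ by $-1$ whose net effect is that $B$ is unchanged and the entry $a_{ij}$ of $A$ — which we identify with the edge $\{r_i,c_j\}$ of $\Gcal_A$ — is multiplied by $\tau_{r_i}\tau_{c_j}$, where $\tau_{r_i}=-1$ records that we negated row $i$ of $A$ and $\tau_{c_j}=-1$ records that we negated column $j-d$ of $A$. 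In particular the outcome is again a matrix of the form $(B\,|\,A')$.

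It then remains to choose $\tau$ so that $\tau_{r_i}\tau_{c_j}=\sigma_\ell$ for every edge $p_\ell=\{r_i,c_j\}$ of the coordinatizing path $P$. I would do this one connected component of $\Gcal_A$ at a time: on each component $P$ restricts to a spanning tree, so I can fix a root, set $\tau=1$ at the root, and extend by the rule $\tau_w:=\tau_v\,\sigma_{\{v,w\}}$ whenever $w$ is joined to an already-labelled vertex $v$ by an edge of $P$. Because $P$ has no cycle, this recursion is well defined and labels every vertex, so the required identity holds on all of $P$; this is the same computation as the leaf-removal procedure of Example~\ref{Example:Constructions}, carried out in reverse. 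Performing the corresponding row and column sign changes on $X$ then yields $X'=(B\,|\,A')$ with the entry $p_j$ of $A'$ equal to $\sigma_j$ times the corresponding entry of $A$, as desired.

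The only point requiring a little care — it is not genuinely hard — is the bookkeeping that forces the flips of row $i$ and column $i$ of $X$ to be done \emph{in tandem} for $i\le d$, which is exactly what keeps $B$ diagonal with non-negative entries; this is also the structural reason one can prescribe signs only along a coordinatizing path (a forest) rather than on an arbitrary set of entries of $A$, since around a cycle of $\Gcal_A$ the product of the factors $\tau_{r_i}\tau_{c_j}$ is forced to equal $1$.
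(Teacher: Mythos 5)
Your proof is correct and is essentially the paper's argument in a different packaging: the paper proceeds by induction on $\abs{P}$, removing a degree-one vertex of the forest and fixing the last sign by one line flip, which is exactly your root-to-leaf propagation of the vertex signs $\tau$ along each spanning tree run in reverse, and both hinge on $P$ containing no cycle. One point in your favour is that you make explicit that for $i\le d$ a row flip must be paired with flipping column $i$ of $X$ so that $B$ stays diagonal with non-negative entries, a bookkeeping step the paper leaves implicit.
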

\begin{proof}
  This lemma is a modified version of \cite[Proposition~2.7.3]{brylawski-lucas-1976} and we are proving it in a similar way.
  The proof is by induction on $\abs{P}$. If $\abs{P}=0$, there is nothing to prove. Let us assume that we have proved that the 
  statement is true for all matrices $\tilde A$ that have a coordinatizing path $\tilde P$ with $\bigl|\tilde P\bigr|<k$.
  Suppose $\abs{P} = k \ge 1$. 
  Since every forest that contains at least one edge has a vertex of degree one, there is some $a_{ij} = p_s\in P$ which is the unique entry common to 
  $P$ and some line (row $r_i$ or column $c_j$) of $A$. Assume that line is row $r_i$. Then deleting that row from $A$ one easily sees that $\tilde P = P \setminus \{ p_s \}$ 
  is a coordinatizing path for the matrix 
  obtained from $A$ by deleting row $r_i$.
  By induction, we are able to change the signs of the entries of $ \tilde P $ as prescribed by $ \sigma $ by multiplying rows and columns by $-1$ 
  (which we may perform in $A$), giving $p_s=a_{ij}$ the value $ \tau a_{ij}$ for some $\tau\in \{-1,1\}$. 
  If we then multiply row $r_i$ in $A$ by $ \sigma_s \tau $, we assign $p_s$ the appropriate sign
  and we affect none of the entries of the coordinatizing path $\tilde P$ that were previously considered.

 Since multiplying rows and columns of a matrix $X$ by $-1$ does not change the arithmetic matroid $\Acal(X)$, both $X$ and $X'$ represent the same arithmetic matroid.
\end{proof}

 \begin{Lemma}
  \label{Lemma:RepresentationsUniqueUpToSign}
  Let $ X \in \Z^{d\times N} $ be a matrix of full rank $d$ that
  represents an arithmetic matroid $\Acal$.
  Suppose that $X$ is in $B$-basic form, in particular 
  $X=(B \,|\,A)$.
  Suppose that $X'=(B\,|\,A')\in \Z^{d\times N}$ represents the same arithmetic matroid.
  Then the entries of $A$ and $A'$ are equal up to sign, \ie $\abs{a_{ij}} = \abs{a_{ij}'}$.
 \end{Lemma}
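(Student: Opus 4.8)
The plan is to compare, entry by entry, the absolute values of $A$ and $A'$ by exploiting the determinantal interpretation of the multiplicity function \eqref{eq:BasisMultiplicityDeterminant}, together with the fact that both $X$ and $X'$ start with the same diagonal block $B$. Since $B=\diag(b_1,\ldots,b_d)$, the multiplicity of the basis $B$ is $m(B)=b_1\cdots b_d$, and for each $i\in[d]$ we have $m(\{e_i\})=b_i$. For a column $e_j$ with $j>d$, consider the set $B_{ij}:=(B\setminus\{e_i\})\cup\{e_j\}$. This is a basis of the matroid exactly when $a_{ij}\ne 0$; moreover, when we expand the determinant of the corresponding $d\times d$ submatrix of $X$ along the column $e_j$, all but one term vanishes because the remaining columns of $B$ are standard basis vectors scaled by the $b_k$. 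One gets
\[
  m(B_{ij}) = \abs{\det} = \abs{a_{ij}}\cdot \prod_{k\ne i} b_k .
\]
The same computation applied to $X'$ gives $m(B_{ij})=\abs{a_{ij}'}\cdot\prod_{k\ne i}b_k$. Since $X$ and $X'$ represent the same arithmetic matroid, the two values of $m(B_{ij})$ agree, and since $\prod_{k\ne i}b_k>0$ we may cancel it to conclude $\abs{a_{ij}}=\abs{a_{ij}'}$ whenever $a_{ij}\ne 0$.

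It remains to handle the entries that are zero. If $a_{ij}=0$, then $B_{ij}$ is a dependent set in the matroid of $X$, hence also in the matroid of $X'$ (the underlying matroid is part of the data of $\Acal$); but $B_{ij}$ fails to be a basis of $X'$ precisely when the submatrix of $X'$ on columns $(B\setminus\{e_i\})\cup\{e_j\}$ is singular, which by the same one-term determinant expansion happens if and only if $a_{ij}'=0$. So the zero pattern of $A$ and $A'$ coincides, and $\abs{a_{ij}}=0=\abs{a_{ij}'}$ in those positions as well. Combining the two cases gives $\abs{a_{ij}}=\abs{a_{ij}'}$ for all $i\in[d]$, $j\in\{d+1,\ldots,N\}$, which is the claim.

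I do not expect a genuine obstacle here; the only point that requires a little care is making sure the determinant expansion is written correctly (the column $e_i$ of $B$ is removed and replaced by $e_j$, so the surviving minor is the product of the $b_k$ for $k\ne i$, and the relevant cofactor picks out exactly the entry $a_{ij}$ up to sign). One should also note explicitly that $B$ being in diagonal form with non-negative — in fact, since $\det B\ne 0$, strictly positive — entries is what makes $\prod_{k\ne i}b_k$ a nonzero positive integer that can be cancelled. No appeal to weak multiplicativity or to Lemma~\ref{Lemma:CoordinatizingPathUniqueness} is needed for this lemma; those enter only in the subsequent step, where the signs (not just absolute values) must be pinned down.
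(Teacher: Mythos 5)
Your proposal is correct and follows essentially the same route as the paper: both arguments replace a single diagonal column $e_i$ of $B$ by a column $e_j$ with $j>d$, observe that the resulting set is a basis iff $a_{ij}\neq 0$ (equivalently $a_{ij}'\neq 0$), and then use \eqref{eq:BasisMultiplicityDeterminant} to express $\abs{a_{ij}}$ as $m(B_{ij})/\prod_{\nu\neq i}b_{\nu\nu}$, a quantity determined by the arithmetic matroid alone. Your additional remarks about the positivity of the $b_{\nu\nu}$ and the one-term cofactor expansion are accurate but do not change the argument.
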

 \begin{proof}
   Recall  that the columns of $ A $ and $ A' $ are labelled by $ d+1, \ldots, N$.
   For $j\in \{d+1,\ldots, N\}$, the set $\{e_1,\ldots, \hat e_i, \ldots, e_d, e_j\}$ is dependent if and only if 
   the determinant of the corresponding submatrices of $X$ and $X'$ is $0$.
   This holds if and only if $ a_{ij} = a'_{ij} = 0$.
   If the set is independent, \ie it is a basis, by \eqref{eq:BasisMultiplicityDeterminant}, %
   \begin{equation*}
    \label{eq:MatrixMultiplicityfunction}
       \abs{a_{ij}} = \frac{ m ( \{1,\ldots, \hat i, \ldots, d, j\} ) }{ \prod_{\nu\in [d] \setminus \{ i \} }{ b_{\nu\nu} } } = \abs{a_{ij}'}. \qedhere
   \end{equation*}
 \end{proof}
 \begin{Lemma}
 \label{Lemma:SubmatrixDeterminantHNF}
  Let $ X \in \Z^{d\times N} $ be a matrix of full rank $d$ that
  represents an arithmetic matroid $\Acal$.
  Suppose that $X$ is in $B$-basic form, 
  in particular $X=(B \,|\,A)$.
  Then up to sign, any non-zero subdeterminant of $A$ 
  is determined by the arithmetic matroid $\Acal(X)$.
 \end{Lemma}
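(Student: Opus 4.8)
The plan is to express every square submatrix of $A$ in terms of a basis multiplicity of $\Acal$ via \eqref{eq:BasisMultiplicityDeterminant}, generalising the computation in the proof of Lemma~\ref{Lemma:RepresentationsUniqueUpToSign} from $1\times 1$ minors to minors of arbitrary size.

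First I would fix a square submatrix of $A$, say the one with row set $R\subseteq[d]$ and column set $S\subseteq\{d+1,\dots,N\}$, where $|R|=|S|=k$; denote it by $A[R,S]$. The key observation is to pass to the $d$-element set $T:=([d]\setminus R)\cup S\subseteq[N]$ and consider the $d\times d$ submatrix $X_T$ of $X$ formed by the columns indexed by $T$. Since $B$ is diagonal, for each $\nu\in[d]\setminus R$ the column $e_\nu$ of $X$ is $b_{\nu\nu}$ times a standard basis vector, so a Laplace expansion of $\det X_T$ along these $d-k$ columns forces them onto the rows $[d]\setminus R$, and the complementary minor is exactly $\det A[R,S]$. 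Hence
\[
 \abs{\det X_T} \;=\; \Bigl(\prod_{\nu\in[d]\setminus R} b_{\nu\nu}\Bigr)\,\abs{\det A[R,S]}.
\]
Next I would identify the two factors on the right with combinatorial data. Each diagonal entry satisfies $b_{\nu\nu}=m(\{e_\nu\})\ge 1$, because the column $e_\nu$ spans a coordinate line whose lattice of integer points is generated by the corresponding standard basis vector; thus $\prod_{\nu\in[d]\setminus R} b_{\nu\nu}$ is a positive integer depending only on $\Acal$. Since this product is non-zero, $T$ is a basis of the underlying matroid if and only if $\det A[R,S]\neq 0$ — so whether the subdeterminant vanishes is itself determined by $\Acal$. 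In the case we care about, $\det A[R,S]\neq 0$, so $T$ is a basis and \eqref{eq:BasisMultiplicityDeterminant} gives $\abs{\det X_T}=m(T)$. Combining the two displays yields
\[
 \abs{\det A[R,S]} \;=\; \frac{m(T)}{\prod_{\nu\in[d]\setminus R} m(\{e_\nu\})},
\]
which manifestly depends only on $\Acal$.

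I do not expect a real obstacle here: the only points needing a little care are the sign/Laplace bookkeeping in the first display — one should fix the ordering of $T$ inherited from $[N]$, but reordering columns changes a determinant only by a sign, so it is harmless for a statement about absolute values — and the identification $b_{\nu\nu}=m(\{e_\nu\})$, which is what pins down the denominator. This lemma is precisely the arbitrary-minor strengthening of Lemma~\ref{Lemma:RepresentationsUniqueUpToSign}, which is the special case $k=1$, $R=\{i\}$, $S=\{j\}$.
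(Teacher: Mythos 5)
Your proof is correct and follows essentially the same route as the paper: pass to the $d$-element set $T=([d]\setminus R)\cup S$, use the diagonality of $B$ to factor $\abs{\det X_T}$ as $\prod_{\nu\in[d]\setminus R} b_{\nu\nu}\cdot\abs{\det A[R,S]}$, and apply \eqref{eq:BasisMultiplicityDeterminant}. You merely spell out the Laplace expansion and the identification $b_{\nu\nu}=m(\{e_\nu\})$ in more detail than the paper does.
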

 \begin{proof}
    Let  $I\subseteq [d]$ and $J\subseteq \{d+1,\ldots, N\}$ be two sets of the same cardinality.
  Let $S$ be the submatrix of $A$ whose rows are indexed by $I$ and whose columns are indexed by $J$.
  If $\det(S)\neq 0$, then $B' := ( [d] \setminus I) \cup J$ is a basis.  
 It follows   from \eqref{eq:BasisMultiplicityDeterminant}
 that
  $ m(B') = \abs{\det(S)} \prod_{\nu \in [d] \setminus I} b_{\nu\nu} $.
  Of course, $b_{\nu\nu }$ is equal to the multiplicity of the $\nu$th column of $B$.
 \end{proof}

\begin{Lemma}
 \label{Lemma:Uniqueness}
 The matrix $X'$ in 
 Lemma~\ref{Lemma:CoordinatizingPathUniqueness} is uniquely determined.
\end{Lemma}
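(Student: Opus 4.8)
The plan is to show that once we fix the coordinatizing path $P$ and the sign vector $\sigma$, the entire matrix $X' = (B\,|\,A')$ is forced. By Lemma~\ref{Lemma:RepresentationsUniqueUpToSign}, every entry $a'_{ij}$ of $A'$ is determined in absolute value by $\Acal(X)$, so the only freedom is a choice of sign for each non-zero entry. The entries corresponding to edges of $P$ have their signs prescribed by $\sigma$ (and the absolute values are the same as in $A$). So it suffices to prove: \emph{the sign of every non-zero entry $a'_{ij}$ with the edge $\{r_i,c_j\}\notin P$ is determined by the signs of the entries on $P$ together with the arithmetic matroid $\Acal(X)$.}

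First I would fix such an edge $e = \{r_i, c_j\}\notin P$. Since $P$ is a spanning forest of $\Gcal_A$ and $e$ is not a loop (the graph is bipartite, hence simple), the endpoints $r_i$ and $c_j$ lie in the same connected component of $\Gcal_{A'}$ (the support patterns of $A$ and $A'$ agree by Lemma~\ref{Lemma:RepresentationsUniqueUpToSign}), so $P\cup\{e\}$ contains a unique cycle $\Gamma$ — the coordinatizing circuit for $e$. Let the entries of $A'$ indexed by the edges of $\Gamma$ be $a'_{i_1 j_1}, a'_{i_1 j_2}, a'_{i_2 j_2}, \ldots, a'_{i_r j_r}, a'_{i_r j_1}$, so that consecutive entries share alternately a row index and a column index, with $e = \{r_{i_1}, c_{j_1}\}$ say appearing last. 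The key observation is that the $2\times 2$ minors of $A'$ are controlled: whenever $a'_{i k}, a'_{i \ell}, a'_{i' k}, a'_{i' \ell}$ are the four entries of a $2\times 2$ submatrix of $A'$ with $a'_{ik}, a'_{i\ell}, a'_{i'k}\neq 0$ and the corresponding $2\times 2$ determinant is non-zero, that determinant is determined up to sign by $\Acal$ via Lemma~\ref{Lemma:SubmatrixDeterminantHNF}; and if the determinant is forced to be $0$ by the matroid (the set $B'$ being dependent), then $a'_{i'\ell} = a'_{i\ell} a'_{i'k}/a'_{ik}$ is determined — sign and all — by the other three entries. Multiplying the entries of $A'$ along the cycle $\Gamma$ using these relations, I would express $a'_{i_1 j_1}$ in terms of the entries $a'_{i_s j_s}$, $a'_{i_{s} j_{s+1}}$ on the path part $P\cap\Gamma$ (whose signs are known) and a bounded number of $2\times 2$ subdeterminants (each determined up to sign by $\Acal$, but whose absolute values are also known, since the individual entries are). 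Tracking the signs carefully around the cycle then pins down the sign of $a'_{i_1 j_1}$. Since $B$ is already fixed and positive, $X'=(B\,|\,A')$ is completely determined.

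The main obstacle will be the sign bookkeeping in the last step: a $2\times 2$ determinant $\det\begin{pmatrix} a'_{ik} & a'_{i\ell}\\ a'_{i'k} & a'_{i'\ell}\end{pmatrix}$ is only known up to sign from $\Acal$, so a priori each vertex of the cycle $\Gamma$ where the ``turn'' from a row to a column happens could introduce a sign ambiguity. What saves the argument is that the \emph{magnitude} $|a'_{i'\ell}|$ is already known (from Lemma~\ref{Lemma:RepresentationsUniqueUpToSign}), and one applies the relation in the direction that has no ambiguity: namely, solving $a'_{i\ell}a'_{i'k} - a'_{ik}a'_{i'\ell} = \pm D$ for $a'_{i'\ell}$ given the known sign of $a'_{ik}$ and the matroid-determined value of the (signed) $2\times 2$ minor up to global sign — and then noting that exactly one choice of that global sign yields an integer entry of the correct magnitude, \emph{or}, in the degenerate case $D=0$, the sign is outright forced. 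Iterating this around $\Gamma$, starting from the known-sign entries of $P$, determines $a'_{i_1 j_1}$ uniquely; doing this for every edge off $P$ finishes the proof.

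I would also remark that this is the arithmetic analogue of \cite[Proposition~2.7.3]{brylawski-lucas-1976}: in the field case the values of the $2\times 2$ minors play no role because all non-zero entries are $\pm 1$, whereas here we genuinely need Lemma~\ref{Lemma:SubmatrixDeterminantHNF} to control the cycle relations quantitatively, not just up to support.
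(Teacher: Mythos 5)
Your reduction of the problem is right (by Lemma~\ref{Lemma:RepresentationsUniqueUpToSign} only the signs of the off-path entries are in question, and each such entry closes a unique coordinatizing circuit with $P$), and your treatment of a circuit of length $4$ is essentially the paper's. But the core of your plan --- propagating the sign around a longer coordinatizing circuit $\Gamma$ using only $2\times 2$ minors --- breaks down as soon as $\Gamma$ is a chordless circuit of length $\ge 6$, and such circuits do occur (the paper's Example~\ref{Example:Constructions} has the coordinatizing circuit $\{a_{14}, a_{24}, a_{25}, a_{35}, a_{36}, a_{16}\}$ of length $6$). Along such a circuit, three consecutive non-zero entries form an ``L''-shape $a_{i_1 j_1}, a_{i_2 j_1}, a_{i_2 j_2}$ whose fourth corner $a_{i_1 j_2}$ is \emph{zero} (a non-zero value there would be a chord). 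The $2\times2$ determinant is then $a_{i_1 j_1}a_{i_2 j_2}$, whose absolute value is just the product of absolute values you already know from Lemma~\ref{Lemma:RepresentationsUniqueUpToSign}; it carries no information about relative signs. Neither of your two mechanisms applies: the determinant is not zero (so you cannot solve for the fourth entry), and knowing it up to sign does not distinguish the two candidate signs of $a_{i_1 j_1}$. So for a chordless $6$-circuit your iteration never gets started, and no sequence of $2\times 2$ minors determines the sign of the off-path entry.

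What is actually needed --- and what the paper does --- is to use the full $m\times m$ subdeterminant spanned by a $2m$-circuit. For a chordless circuit, that $m\times m$ submatrix is (up to permutation) a sum of two permutation matrices, so its determinant has the form $a_1x+y$ with $x,y\neq 0$ products of known entries, and $\lvert a_1x+y\rvert=\lvert -a_1x+y\rvert$ forces $a_1xy=0$, a contradiction; this pins down the sign. When the circuit has a chord $\beta$, the chord splits it into a $\theta$-graph of two shorter circuits, and an induction on circuit length (organised via the sets $P_2^*\subseteq P_3^*\subseteq\cdots$ of already-determined entries) first determines $\beta$ and then the target entry. Your write-up is missing both the passage to larger minors and this induction/chord case, and the first of these is not a presentational gap but a step where the argument as proposed would fail.
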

\begin{proof}
  This proof uses some ideas of the proof of \cite[Theorem~3.2]{brylawski-lucas-1976}.
  Let $X''=(B\,|\,A'')$ be another matrix that satisfies the consequence
  in Lemma~\ref{Lemma:CoordinatizingPathUniqueness}.
  In particular, we assume that the entries of $A'$ and $A''$ in the coordinatizing path are equal.
  By Lemma~\ref{Lemma:RepresentationsUniqueUpToSign}, the entries of $A$, $A'$, and $A''$ must be equal up to sign.
  Hence it is sufficient to show that all non-zero entries of  $A'$ and $A''$ that are not contained in the
  coordinatizing path are equal.
  
  Recall that $C$ denotes the $B$-fundamental circuit incidence matrix.
  Let us consider  a  non-zero entry $\alpha$ of $C$ that is not contained in the coordinatizing path.
  $\alpha$ is contained in a unique coordinatizing circuit $\Ccal$.
  Let $a_1$ and $a_2$ denote the  entries of $A'$ and $A''$ that correspond to $\alpha$.
    
  Suppose first that  $\abs{\Ccal}=4$.
  Then $\Ccal$ corresponds to a $(2\times 2)$-submatrix of $A'$ or $A''$, respectively. 
  The three other entries besides $a_1$ or $a_2$
  are contained in the coordinatizing path $P$ and therefore, by assumption, they are equal for $A'$ and $A''$.
  We will denote these three entries by $b$, $c$, and $d$.
  Then (up to relabelling the entries), the determinants of the two submatrices are
   $ a_1d - bc$ and $ a_2 d - bc$, respectively.
  Since $X'$ and $X''$ define the same arithmetic matroid, it follows from Lemma~\ref{Lemma:SubmatrixDeterminantHNF}
  that the absolute values the two determinants must be equal.
  Now suppose $a_1 = -a_2$. Then  $\abs{ a_1d - bc } = \abs{ ((-a_1)d - bc }$ must hold. This is equivalent to
  $ a_1d - bc = -a_1d - bc$ or $ a_1d - bc = a_1d + bc $. Both cases are impossible if all four number are non-zero. Hence $a_1 = a_2$.
  
  Let $P_2$ be the union of the coordinatizing path $P$ with the coordinatizing circuit $\Ccal$.
   We have determined all the entries in $P_2$
  uniquely.
  Now by an analogous argument, we can uniquely determine all entries of $ C \setminus P_2$ which complete a circuit of size $4$ in $\Gcal_A$ with elements of $P_2$.
  Continuing this process we end by uniquely determining all entries which can be attained  by a sequence of circuits of size $4$, 
  three of whose members having been previously determined.
  We call the resulting set of determined entries $P_2^*$.

  Now let $\alpha \in C \setminus P_2^*$ be an entry that completes a circuit $\Ccal$ of 
  size $6$ in $\Gcal_A$ with elements of  $P_2^*$. 
  The circuit $\Ccal$ corresponds to a $3\times 3$ submatrix $S$ of $C$.
  Again, let $a_1$ and $a_2$ denote the entries of $A'$ and $A''$ that correspond to $\alpha$.
    There are two cases to consider:
  \begin{asparaenum}%
  \item  The $3\times 3$ submatrix $S$ has for its non-zero entries only the $6$ entries of $\Ccal$.
   In this case, $S$ has two non-zero entries in each row and column. Hence it is the sum of two permutation matrices.
   This implies that the corresponding subdeterminants of $A'$ and $A''$ are equal 
   to $a_1 x + y$ and $a_2 x + y$, respectively, for some $x, y \neq 0$.
   As above, it is easy to see that it is not possible to have $a_1=-a_2$  (which 
   implies
   $\abs{ a_1 x + y } = \abs{(-a_1)x+y} \,$)  if $ a_1, x, y \neq 0 $.
   \item If there is another non-zero entry $\beta$ in $S$, 
   then $\beta$ represents an additional edge which   short-circuits the circuit $\Ccal$ in the sense that it cuts across
   $\Ccal$ to 
   form a $\theta$-graph. Thus $\beta$ completes two smaller circuits with $ \Ccal \cup \{\beta\}$,
   one containing some previously determined elements and $\beta$, the other containing $\alpha$ and $\beta$.   The former circuit
   implies that $ \beta \in P_2^*$. Hence the latter circuit shows that $\alpha \in P_2^*$ as well.
   See Figure~\ref{Figure:BipartiteGraphSC} for an example of this setting.
   \end{asparaenum}

 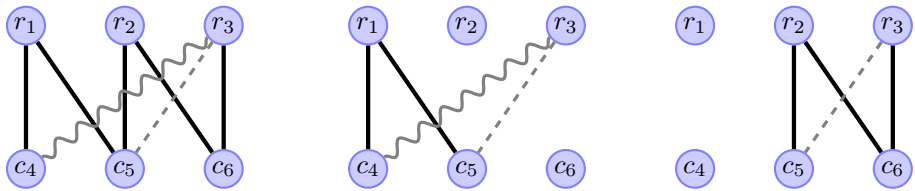
\begin{figure}[t]
 \begin{center}
  \begin{tikzpicture}[line join=round, scale=1,
 gitterpunkt/.style={color=black},
 fe/.style={color=black, ultra thick}, 
 ne1/.style={color=gray, very thick, decorate, decoration=snake },  
 ne2/.style={color=gray, very thick, dashed},  
 vertex/.style={circle,draw=blue!50,fill=blue!20,thick,
 inner sep=0pt,minimum size=5mm}
 ]

  \node [vertex] (r1) at (-1.3, 0) {$r_1$};
  \node [vertex] (r2) at (0, 0) {$r_2$};
  \node [vertex] (r3) at (1.3, 0) {$r_3$};

  \node [vertex] (c4) at (-1.3, -1.9) {$c_4$};
  \node [vertex] (c5) at (0, -1.9) {$c_5$};
  \node [vertex] (c6) at ( 1.3, -1.9) {$c_6$};

  \draw[fe]  (r1) -- (c4);
  \draw[fe]  (r1) -- (c5);
  \draw[fe]  (r2) -- (c5);
  \draw[fe]  (r2) -- (c6);
  \draw[fe]  (r3) -- (c6);
  \draw[ne1]  (r3) -- (c4);
  \draw[ne2]  (r3) -- (c5);

  \node [vertex] (xr1) at (3.2, 0) {$r_1$};
  \node [vertex] (xr2) at (4.5, 0) {$r_2$};
  \node [vertex] (xr3) at (5.8, 0) {$r_3$};
  \node [vertex] (xc4) at (3.2, -1.9) {$c_4$};
  \node [vertex] (xc5) at (4.5, -1.9) {$c_5$};
  \node [vertex] (xc6) at ( 5.8, -1.9) {$c_6$};

  \draw[fe]  (xr1) -- (xc4);
  \draw[fe]  (xr1) -- (xc5);
  \draw[ne1]  (xr3) -- (xc4);
  \draw[ne2]  (xr3) -- (xc5);

  \node [vertex] (yr1) at (7.5, 0) {$r_1$};
  \node [vertex] (yr2) at (8.8, 0) {$r_2$};
  \node [vertex] (yr3) at (10.1, 0) {$r_3$};
  \node [vertex] (yc4) at (7.5, -1.9) {$c_4$};
  \node [vertex] (yc5) at (8.8, -1.9) {$c_5$};
  \node [vertex] (yc6) at (10.1, -1.9) {$c_6$};

  \draw[fe]  (yr2) -- (yc5);
  \draw[fe]  (yr2) -- (yc6);
  \draw[fe]  (yr3) -- (yc6);
  \draw[ne2]  (yr3) -- (yc5);

 \end{tikzpicture}
 \end{center}
 \caption{A coordinatizing circuit is short-circuited as in the proof of Lemma~\ref{Lemma:Uniqueness}).
 The edges in $P_2^*$ are shown in black. The wavy edge is $\alpha$ and the dashed edge is $\beta$.
 }
 \label{Figure:BipartiteGraphSC}
\end{figure}

   We iterate the above argument to prove that the entries of  $A'$ and $A''$ that are contained in $P_3^*$ must be equal,  
   where $P_3^*$ denotes the set of all non-zero entries of $C$ which can be attained from $P$ by a sequence of circuits of size $2 t$ for $t  \le 3$.
   We define $P_k^*$ analogously and assume that we have uniquely determined all entries of $P_k^*$ for $ k < m $.
   If  $\alpha \in C \setminus P_{m-1}^*$ and $ \alpha $ completes a circuit $\Ccal$ of size $2m$ with entries from $P_{m-1}^*$ then there are two cases:
   \begin{asparaenum}[(1)]
   \item  The $m\times m$ submatrix $S$ of $C$ corresponding to the rows and columns of $\Ccal$ has no non-zero entries other than those of $\Ccal$.
   Then, as above, $S$ is the sum of two permutation matrices 
   and the corresponding subdeterminants of $A'$ and $A''$ are equal to $ a_1 x + y$ and $a_2x + y$ for some $x,y\neq 0$, which implies $a_1=a_2$.
   \item If $S$ contains another non-zero entry $\beta$ then $\Ccal \cup \{\beta\} $ is a $\theta$-subgraph of $\Gcal_A$.
   So using the same argument as in the $(3\times 3)$-case, by induction it follows that the entries of $A'$ and $A''$ that correspond to $\alpha$ must be equal.
   \qedhere
   \end{asparaenum}
\end{proof}

 \begin{proof}[Proof of Theorem~\ref{Theorem:RepresentationUniqueness}]
   By assumption, the matrices $X$ and $X'$ both have full rank.
   Let $B$ be a basis that is weakly multiplicative.
   By Lemma~\ref{Lemma:DiagonalMatrixMultiplicativeBasis}, we 
   may assume that both $X$ and $X'$ are in $B$-basic form, \ie
   $X = (B \,|\, A)$ and $X' = (B \,|\, A')$ for suitable matrices $A$, $A'$, and $B$.
   Lemma~\ref{Lemma:RepresentationsUniqueUpToSign} implies that the entries of $A$ and $A'$ must be equal up to sign.

   Let $P$ be a coordinatizing path.
   By Lemma~\ref{Lemma:CoordinatizingPathUniqueness}, we may assume that the entries of $A$ and $A'$ that are contained in $P$ are equal,
   after multiplying some rows and columns of $X$ by $-1$. We are permitted to do these operations:
  recall that multiplying a  row of $X$ by $-1$ corresponds to multiplying $X$ from the left with a certain matrix in $\GL(d,\Z)$.
  Multiplying a column of $X$ by $-1$ corresponds to multiplying $X$ from the right with a certain non-singular diagonal matrix with diagonal entries in
  $\{ 1, -1\}$. 
  We conclude by observing that Lemma~\ref{Lemma:Uniqueness} implies that all the remaining entries of $A$ and $A'$ must be equal too.
 \end{proof}

 \section{Arithmetic matroid strata of the integer Grassmannian}
 
 In this section we will use the results in this paper  to describe certain ``strata'' of an integer analogue of the Grassmannian.
 
  Recall that for a matrix $A\in \Z^{d \times (N-d)}$, $\kappa(A)$ denotes the  number of connected components of the bipartite graph with adjacency matrix $A$.
  We obtain the following result by combining 
  Lemma~\ref{Lemma:CoordinatizingPathUniqueness},
  Lemma~\ref{Lemma:RepresentationsUniqueUpToSign},
  and Lemma~\ref{Lemma:Uniqueness}.
 
 \begin{Proposition}
 \label{Proposition:NoOfRepresentations}
   Let $X=(B\,|\,A)\in \Z^{d\times N}$ with  $B\in \Z^{d\times d}$ a diagonal matrix of full rank $d$ and $A\in \Z^{ d \times (N-d)}$.
   Let $P$ be a coordinatizing path.
   For each of the $2^{N-\kappa(A)}$ possible choices of signs of the entries of $P$, there is a unique matrix $X_\sigma = (B \,|\, A_\sigma)$
   with these signs 
   that represents the same arithmetic matroid   $\Acal(X)$.    
   
   All representations of $\Acal(X)$ that are in $B$-basic form for this basis $B$ can be obtained in this way.
 \end{Proposition}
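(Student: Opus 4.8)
The plan is to obtain the proposition by assembling the three structural lemmas proved above. Fix a sign vector $\sigma\in\{-1,1\}^{N-\kappa(A)}$. Existence of a matrix $X_\sigma=(B\,|\,A_\sigma)$ that represents $\Acal(X)$ and whose entries along the coordinatizing path $P$ carry the prescribed signs is exactly the content of Lemma~\ref{Lemma:CoordinatizingPathUniqueness}, applied to $X$, the path $P$, and $\sigma$. Its uniqueness among all matrices of the form $(B\,|\,A')$ representing $\Acal(X)$ with this behaviour on $P$ is exactly Lemma~\ref{Lemma:Uniqueness}. So the first assertion will follow once we check that the $2^{N-\kappa(A)}$ matrices $X_\sigma$ are pairwise distinct.

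For that, I would recall that a coordinatizing path is, by definition, (the edge set of) a spanning forest $F$ of the bipartite graph $\Gcal_A$, and that the edges of $\Gcal_A$ are in bijection with the non-zero entries of $A$. Hence every entry of $A$ indexed by an element of $P$ is non-zero, and therefore the corresponding entry of $A_\sigma$ equals $\sigma_j$ times a non-zero integer. Consequently, distinct sign vectors $\sigma\neq\sigma'$ force $X_\sigma$ and $X_{\sigma'}$ to differ in at least one entry of $P$, so $\sigma\mapsto X_\sigma$ is injective and we obtain exactly $2^{N-\kappa(A)}$ representations of $\Acal(X)$ of this type.

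Finally, for completeness, I would take an arbitrary representation $Y=(B\,|\,A'')$ of $\Acal(X)$ in $B$-basic form for the basis $[d]$ underlying $B$. By \eqref{eq:BasisMultiplicityDeterminant} the diagonal entries of the first block of $Y$ are forced to equal $m(\{e_1\}),\ldots,m(\{e_d\})$, so that first block is indeed $B$. By Lemma~\ref{Lemma:RepresentationsUniqueUpToSign} we have $\abs{a''_{ij}}=\abs{a_{ij}}$ for all $i,j$; in particular the entries of $A''$ indexed by $P$ are non-zero (since those of $A$ are), so there is a unique $\sigma\in\{-1,1\}^{N-\kappa(A)}$ for which the $p_j$-entry of $A''$ equals $\sigma_j$ times the $p_j$-entry of $A$. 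Then $Y$ and $X_\sigma$ are both of the form $(B\,|\,\cdot)$, both represent $\Acal(X)$, and both have the prescribed signs along $P$; the uniqueness part of Lemma~\ref{Lemma:Uniqueness} therefore gives $Y=X_\sigma$, which is the last assertion.

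I do not expect a genuine obstacle here: the three lemmas carry the entire argument, and the only point that needs a moment's care is the observation that the entries of $A$ indexed by a coordinatizing path are non-zero — this is what makes the signs $\sigma_j$ well defined in the completeness step and, simultaneously, what guarantees that different sign patterns produce genuinely different matrices.
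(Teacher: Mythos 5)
Your proof is correct and takes essentially the same route as the paper, which establishes the proposition simply by combining Lemmas~\ref{Lemma:CoordinatizingPathUniqueness}, \ref{Lemma:RepresentationsUniqueUpToSign} and \ref{Lemma:Uniqueness}; the extra details you supply (non-vanishing of the entries indexed by $P$, distinctness of the $X_\sigma$, the forced diagonal block in the completeness step) are exactly what the paper leaves implicit. One tiny quibble: the claim that the diagonal entries of the first block of $Y$ are forced follows from the definition of the multiplicity of a singleton (the gcd of the column, which for a diagonal column is the diagonal entry), rather than literally from \eqref{eq:BasisMultiplicityDeterminant}, which concerns bases.
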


 Grassmannians are 
 fundamental objects in algebraic geometry (\eg \cite{OrientedMatroidsBook,harris-1995}).
 For a field $\K$, the Grassmannian $\Gr_\K(d,N)$ can be defined as the set of $(d\times N)$-matrices over $\K$ of full rank
 modulo a left action of $\GL(d,\K)$. Similarly, one can define the integer Grassmannian  $\Gr_\Z(d,N)$ as
  the set of all matrices $X\in \Z^{d\times N}$ of full rank, modulo a left action of $\GL(d,\Z)$.
 The set of representations of a fixed  torsion-free arithmetic matroid $\Acal$ of rank $d$ on $N$ elements
 is a subset of $ \Z^{d\times N}  $ that is invariant under a left action of
 $ \GL(d, \Z) $  and a right action of diagonal $(N\times N)$-matrices with entries in $\{\pm 1\}$, \ie of
 $(\Z^*)^N$, the maximal multiplicative subgroup of $\Z^N$.
  This leads to a stratification of the integer Grassmannian $\Gr_\Z(d,N)$ into arithmetic matroid strata $\Rcal(\Acal)=
  \{ \bar X \in \Gr_\Z(d,N) : X \text{ represents } \Acal \}$.
 Proposition~\ref{Proposition:NoOfRepresentations} allows us to calculate the cardinality of certain arithmetic matroid strata.
 \begin{Corollary}
 Let $\Acal$ be an arithmetic matroid of rank $d$ on $N$ elements that is weakly multiplicative and representable.
 Let $X=(B\,|\,A)$ be a representation in $B$-basic form.
  Then the arithmetic matroid stratum of $\Acal$  of the integer Grassmannian   $\Gr_\Z(d,N)$ has $2^{ N - \kappa( A ) }$ elements.
 \end{Corollary}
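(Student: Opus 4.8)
The plan is to show that being in \emph{$B$-basic form} is a normal form for the elements of the stratum $\Rcal(\Acal)$, so that counting points of $\Rcal(\Acal)$ reduces to counting honest $B$-basic matrices, which is precisely what Proposition~\ref{Proposition:NoOfRepresentations} already does. First I would note that the hypotheses are consistent: since $\Acal$ is weakly multiplicative it has a multiplicative basis, so by Lemma~\ref{Lemma:DiagonalMatrixMultiplicativeBasis} together with the existence of the Hermite normal form with respect to that basis, $\Acal$ admits a representation $X=(B\,|\,A)$ in $B$-basic form. Moreover, in any such form the basis $B$ is automatically multiplicative: $B$ is diagonal, so $m(B)=\abs{\det B}=\prod_i b_{ii}=\prod_i m(\{e_i\})$, the $i$th column of $B$ being $b_{ii}e_i$. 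In particular the diagonal block $B$ is forced by $\Acal$ and the choice of basis, since $b_{ii}=m(\{e_i\})$.

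Next I would verify the two halves of the count. For injectivity, suppose two $B$-basic representations $X_\sigma=(B\,|\,A_\sigma)$ and $X_{\sigma'}=(B\,|\,A_{\sigma'})$ of $\Acal$ become equal in $\Gr_\Z(d,N)$, say $X_\sigma=T X_{\sigma'}$ with $T\in\GL(d,\Z)$. Comparing the first $d$ columns gives $B=TB$, hence $T=\id$ because $B$ is invertible, so $X_\sigma=X_{\sigma'}$ as matrices; since, by Proposition~\ref{Proposition:NoOfRepresentations}, distinct sign vectors $\sigma$ give matrices differing in the sign of some non-zero entry along the coordinatizing path $P$, this yields $2^{N-\kappa(A)}$ pairwise distinct classes $\overline{X_\sigma}\in\Rcal(\Acal)$. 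For surjectivity, let $\bar Y\in\Rcal(\Acal)$ and choose a representative $Y$ representing $\Acal$; since $B=[d]$, its first $d$ columns form a basis of the underlying matroid. Because $B$ is a multiplicative basis, Lemma~\ref{Lemma:DiagonalMatrixMultiplicativeBasis} and the existence of the Hermite normal form give $T'\in\GL(d,\Z)$ with $T'Y$ in $B$-basic form, its first $d$ columns being diagonal with positive diagonal entries, which must equal the prescribed $B$ by the remark above. By the last sentence of Proposition~\ref{Proposition:NoOfRepresentations}, $T'Y=X_\sigma$ for some $\sigma$, whence $\bar Y=\overline{T'Y}=\overline{X_\sigma}$.

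Putting the two halves together, $\Rcal(\Acal)=\{\,\overline{X_\sigma}:\sigma\in\{-1,1\}^{\,N-\kappa(A)}\,\}$ has exactly $2^{N-\kappa(A)}$ elements. I do not anticipate a real obstacle here: the substantive content --- that there are precisely $2^{N-\kappa(A)}$ matrices in $B$-basic form representing $\Acal$ --- is delivered by Proposition~\ref{Proposition:NoOfRepresentations}, and what remains is checking that passage to $\GL(d,\Z)$-orbits neither identifies two of these matrices nor omits an orbit. The injectivity check uses only the rigidity of the diagonal block $B$ under the left $\GL(d,\Z)$-action; the surjectivity check additionally needs that $B$ is a multiplicative basis, so that Hermite reduction diagonalises its block via Lemma~\ref{Lemma:DiagonalMatrixMultiplicativeBasis}, and this is the one place where weak multiplicativity genuinely enters.
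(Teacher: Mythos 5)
Your proposal is correct and follows essentially the route the paper intends: the corollary is stated as an immediate consequence of Proposition~\ref{Proposition:NoOfRepresentations}, and your argument simply makes explicit the bijection between the $2^{N-\kappa(A)}$ matrices $X_\sigma$ in $B$-basic form and the $\GL(d,\Z)$-orbits in the stratum, using the rigidity of the invertible diagonal block for injectivity and Hermite reduction plus Lemma~\ref{Lemma:DiagonalMatrixMultiplicativeBasis} (with the observation that a diagonal basis block is automatically multiplicative, with $b_{ii}=m(\{e_i\})$) for surjectivity. These details are left implicit in the paper, and your write-up supplies them correctly.
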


\subsection*{Acknowledgements}
 The author would like to thank Elia Saini for several interesting discussions.

\renewcommand{\MR}[1]{} 

 \bibliographystyle{amsplain}

\begin{thebibliography}{10}

\bibitem{bajo-burdick-chmutov-2014}
Carlos Bajo, Bradley Burdick, and Sergei Chmutov, \emph{On the
  {T}utte--{K}rushkal--{R}enardy polynomial for cell complexes}, J. Combin.
  Theory Ser. A \textbf{123} (2014), 186--201. \MR{3157807}

\bibitem{OrientedMatroidsBook}
Anders Bj{\"o}rner, Michel Las~Vergnas, Bernd Sturmfels, Neil White, and
  G{\"u}nter~M. Ziegler, \emph{Oriented matroids}, second ed., Encyclopedia of
  Mathematics and its Applications, vol.~46, Cambridge University Press,
  Cambridge, 1999. \MR{1744046 (2000j:52016)}

\bibitem{branden-moci-2014}
Petter Br{\"a}nd{\'e}n and Luca Moci, \emph{The multivariate arithmetic {T}utte
  polynomial}, Trans. Amer. Math. Soc. \textbf{366} (2014), no.~10, 5523--5540.
  \MR{3240933}

\bibitem{brylawski-lucas-1976}
Thomas Brylawski and Dean Lucas, \emph{Uniquely representable combinatorial
  geometries}, Teorie Combinatorie (Proc.\ 1973 Internat.\ Colloq.) (Rome),
  1976, pp.~83--104.

\bibitem{callegaro-delucchi-2017}
Filippo Callegaro and Emanuele Delucchi, \emph{The integer cohomology algebra
  of toric arrangements}, Adv. Math. \textbf{313} (2017), 746--802.

\bibitem{moci-adderio-2013}
Michele D'Adderio and Luca Moci, \emph{Arithmetic matroids, the {T}utte
  polynomial and toric arrangements}, Adv. Math. \textbf{232} (2013), no.~1,
  335--367.

\bibitem{concini-procesi-book}
Corrado De~Concini and Claudio Procesi, \emph{Topics in hyperplane
  arrangements, polytopes and box-splines}, Universitext, Springer, New York,
  2011. \MR{2722776}

\bibitem{harris-1995}
Joe Harris, \emph{Algebraic geometry}, Graduate Texts in Mathematics, vol. 133,
  Springer-Verlag, New York, 1995, A first course, Corrected reprint of the
  1992 original. \MR{1416564}

\bibitem{lawrence-2011}
Jim Lawrence, \emph{Enumeration in torus arrangements}, European J. Combin.
  \textbf{32} (2011), no.~6, 870--881. \MR{2821558}

\bibitem{lenz-arithmetic-2016}
Matthias Lenz, \emph{Splines, lattice points, and arithmetic matroids}, Journal
  of Algebraic Combinatorics \textbf{43} (2016), no.~2, 277--324.

\bibitem{lenz-ppcram-2017}
\bysame, \emph{On powers of {P}l\"ucker coordinates and representability of
  arithmetic matroids}, 2017, \url{arXiv:1703.10520}.

\bibitem{moci-tutte-2012}
Luca Moci, \emph{A {T}utte polynomial for toric arrangements}, Trans. Amer.
  Math. Soc. \textbf{364} (2012), no.~2, 1067--1088. \MR{2846363}

\bibitem{orlik-terao-1992}
Peter Orlik and Hiroaki Terao, \emph{Arrangements of hyperplanes}, Grundlehren
  der Mathematischen Wissenschaften [Fundamental Principles of Mathematical
  Sciences], vol. 300, Springer-Verlag, Berlin, 1992. \MR{1217488 (94e:52014)}

\bibitem{MatroidTheory-Oxley}
James~G. Oxley, \emph{Matroid theory}, Oxford Science Publications, The
  Clarendon Press Oxford University Press, New York, 1992. \MR{1207587
  (94d:05033)}

\bibitem{schrijver-TheoryLinearIntegerProgramming}
Alexander Schrijver, \emph{Theory of linear and integer programming},
  Wiley-Interscience Series in Discrete Mathematics, John Wiley \& Sons, Ltd.,
  Chichester, 1986, A Wiley-Interscience Publication. \MR{874114}

\bibitem{stanley-1991}
Richard~P. Stanley, \emph{A zonotope associated with graphical degree
  sequences}, Applied Geometry and Discrete Combinatorics, DIMACS Series in
  Discrete Mathematics \textbf{4} (1991), 555--570.

\bibitem{stanley-2007}
\bysame, \emph{An introduction to hyperplane arrangements}, Geometric
  combinatorics, IAS/Park City Math. Ser., vol.~13, Amer. Math. Soc.,
  Providence, RI, 2007, pp.~389--496. \MR{2383131}

\end{thebibliography}

\providecommand{\bysame}{\leavevmode\hbox to3em{\hrulefill}\thinspace}
\providecommand{\MR}{\relax\ifhmode\unskip\space\fi MR }
\providecommand{\MRhref}[2]{%
  \href{http://www.ams.org/mathscinet-getitem?mr=#1}{#2}
}
\providecommand{\href}[2]{#2}

\end{document}